\definecolor{rose}{rgb}{0.82, 0.1, 0.26}
 	\definecolor{blu}{rgb}{0.36, 0.54, 0.66}
 	\definecolor{mor}{rgb}{0.55, 0.0, 0.55}
\theoremstyle{plain}
\newtheorem{theorem}{Theorem}[section]
\newtheorem{cor}[theorem]{Corollary}
\newtheorem{lem}[theorem]{Lemma}
\newtheorem{prop}[theorem]{Proposition}
\theoremstyle{definition}
\newtheorem{definition}[theorem]{Definition}
\newtheorem{rmk}[theorem]{Remark}
\newtheorem*{thm1}{Theorem A}
\theoremstyle{remark}
\newcommand{\PP}{\mathds{P}}
\newcommand{\PPP}{\PP^{2}}
\newcommand{\ii}{\mathcal{I}}
\newcommand{\oo}{\mathcal{O}}
\title{Extremal divisors in the Hilbert scheme of points on $\PPP$ are preserved under residuality }
\author{ Montserrat Vite}
\address{Centro de Ciencias Matemáticas, Morelia, Michoacan.}
\email{montserrat@matmor.unam.mx} 
\date{Enero 2025}
\begin{document}
\maketitle
\begin{abstract}
    Let $n=\frac{r(r+1)}{2}$ or $n=r(r+1)$. We prove that the property of being extremal is preserved under residuality on the Hilbert scheme of $n$ points in the plane.
\end{abstract}

The notion of linkage (liaison) of curves comes from M. Noether's thesis: Given a smooth curve in $\PP^{3}$, consider two hypersurfaces $F,G$ of degree $m,n$ with no common component and containing $C$. The complete intersection $F\cap G$ contains $C$ and a possible curve $D$. In this case we say that $C$ and $D$ are linked. The linkage of curves is an important tool in the study and classification of space curves and their Hilbert schemes (cf. \cite{mdp2}).  Many important questions have been answered using linkage of curves, and nowadays many questions still remain open on the matter (cf. \cite{mignag}). With respect to linkage of curves a very natural question is the following: 
\begin{center}
    \textit{What properties are preserved under linkage (liaison)?}
\end{center}

A first answer to this question is the property of being \textit{arithmetically Cohen-Macaulay (ACM for short)} that is, an ACM curve is ACM after linkage (liaison). (cf. \cite{apery} and \cite{gaeta}). On the other hand, the deficiency modules are preserved under (even) liaison (cf. \cite{mig2}). We are curious to find properties preserved by liaison related to the birational geometry of the varieties, such as the property of lying in an extremal divisor.
 
To be more specific, in \cite{vite} we proved that the closure of ACM curves $\overline{\mathscr{C}_{r}}$ in the Hilbert scheme of curves of degree $d_{r}=\frac{r(r+1)}{2}$ and genus $g_{r}=\frac{r(r+1)(2r-5)}{6}+1$ is irreducible and has two irreducible divisors, each parametrizes curves in the linkage class of two disjoint lines. All these divisors are related by liaison. In the case $r=3$ (degree six and genus three) we verify that these divisors are extremal in the effective cone of $\overline{\mathscr{C}_{r}}$.

 We ask if for the case $r\geq 4$ these divisors will also be extremal. Unfortunately, we could not solve that question, but it is natural to ask the same question in a smaller dimension. This situation motivated us to write these notes where we answer an analogous question in the case of the Hilbert scheme of $n$ points, $\PP^{2[n]}$.

The main advantage to consider in this case is because the birational geometry of the Hilbert scheme of $n$ points on $\PPP$ is very well known. For example, in \cite{abch} resolve the birational geometry of the Hilbert scheme of $n$ points for $n\leq 9$, they discuss the stable base locus decomposition of the effective cone and the corresponding birational models. On the other hand, in \cite{hui} compute the effective cone Eff$(\PP^{2[n]})$ for all values of $n$.

In this context (points in $\PPP$) the definition of liaison is known as residual points:

\begin{definition}
    Let $Z$ a subset of $n$ points in $\PPP$. We say that a subset $Z^{\prime}$ of $m$ points in $\PPP$ is \textit{residual} to $Z$ if there exist two curves $X$ and $Y$ on $\PPP$ such that
    $$X\cap Y =Z\cup Z^{\prime}. $$
\end{definition}

If $Z\in{\PP^{2[n]}}$, with $n$ a triangular number ($n=\frac{r(r+1)}{2}$ for some $r\in{\mathds{N}^{*}}$) and $Z^{\prime}\in{\PP^{2[m]}}$ is residual to $Z$ by the complete intersection of curves of degree $r+1$ then $m=\frac{(r+1)(r+2)}{2}$. Furthermore, we have the following theorem:
\begin{thm1}[Corollary \ref{corresid}]
Let $Z$ and $Z^{\prime}$ as before, then $Z$ lies in an extremal divisor if and only if $Z^{\prime}$ lies in the correspondent extremal divisor.
\end{thm1}

An analogous result is obtained in the case where $n$ is a tangential number ($n=2r(r+1)$ for some $r\in{\mathds{N}^{*}}$).

\section{Triangular points}

Let $Z_{r}$ be a set of $d_{r}:=\frac{r(r+1)}{2}$ general points in the complex projective plane $\mathds{P}^{2}$, this scheme have the following minimal free resolution:
\[ \xymatrix{ 0\ar[r]&\mathcal{O}_{\mathds{P}^{2}}(-(r+1))^{r}\ar[r]^{M_{r}}& \mathcal{O}_{\mathds{P}^{2}}(-r)^{r+1}\ar[r] &  \mathscr{I}_{Z_{r}} \ar[r] &0 }\]
If $\mathds{P}^{2[d_{r}]}$ denote the Hilbert scheme of $d_{r}$ points in the plane. A very well known divisor in this scheme is:
\[D_{r-1}:=\{ Z\in{\mathds{P}^{2[d_{r}]}}|h^{0}(\mathscr{I}_{Z}(r-1))=1\}\]

\begin{rmk} \label{rmk1.1}
 In \cite{fogarty}, proved that the Picard group of the Hilbert scheme of points $\PP^{2[n]}$ is the free abelian group generated by $\oo_{\PP^{2[n]}}(H)$ and $\oo_{\PP^{2[n]}}(\frac{B}{2})$ and the Neron-Severi space $N^{1}(\PP^{2[n]}) = Pic(\PP^{2[n]}) \otimes \mathds{Q}$ and is spanned by the divisor classes $H$ and $B$. Where $H$ is the class of the locus of subschemes $Z$ in $\PP^{2[n]}$ whose supports intersect a fixed line $\ell \subseteq \PPP$ and $B$ is the class of the locus of non-reduced schemes. On the other hand, the effective cone of $\PP^{2[d_{r}]}$ is generated by $B$ and the class of $D_{r-1}$ by \cite{hui}.
 \end{rmk}
In the case $r=3$ we have that $D_{2}$ is the subscheme of six points that lies in a conic. Let $Z\in{D_{2}}$ be a generic element, since its ideal is generated by a conic and a cubic, we can consider two smooth curves $X$ and $Y$ of degree four that contain the points in $Z$ and take its residual, that means, $Z^{\prime}:=X\cap Y-Z$, since the degree of the intersection $X\cap Y$ is 16 and the degree of $Z$ is 6 then $Z^{\prime}\in{\mathds{P}^{2[10]}=\mathds{P}^{2[d_{4}]}}$. By Riemann-Roch we have that
$$h^{0}(\mathscr{I}_{Z}(2))= h^{0}(\mathscr{I}_{Z^{\prime}}(3))$$

By hypothesis $h^{0}(\mathscr{I}_{Z}(2))=1$ thus $h^{0}(\mathscr{I}_{Z^{\prime}}(3))=1$ and by definition implies that $Z^{\prime}\in{D_{3}}$. 

\begin{definition}
    Given a family $\mathscr{A}$ in $\PP^{2[d_{r}]}$ we define a family $\mathscr{L}\mathscr{A}$ given by 
    \[\mathscr{L}\mathscr{A}:=\{ Z^{\prime}\in{\mathds{P}^{2[d_{r+1}]}}| \exists \quad Z\in{\mathscr{A}} \text{ such that }  Z^{\prime}=X_{r+1} \cap Y_{r+1}-Z\}\subseteq \PP^{2[d_{r+1}]}\]
    with $X_{r+1}$ and $Y_{r+1}$ curves of degree $r+1$
\end{definition}
    
The previous discussion proved that $\mathscr{L}D_{2}=D_{3}$. In general, we will prove $\mathscr{L}D_{r}=D_{r+1}$ for all $r\geq 2$. By the remark \ref{rmk1.1} this is exactly the affirmation of Theorem A. For this, we need the following Lemma.

\begin{lem} \label{lema1}
    If $Z_{r}\in{\mathds{P}^{2[d_{r}]}}$ and $Z_{r+1}\in{\mathds{P}^{2[d_{r+1}]}}$ are residuals via the complete intersection of two smooth curves of degree $r+1$ then:
\[h^{0}(\mathscr{I}_{Z_{r}}(r-1))=h^{0}(\mathscr{I}_{Z_{r+1}}(r))\]
\end{lem}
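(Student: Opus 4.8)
The strategy is to reduce the asserted equality of $h^0$'s to an equality of first cohomology groups by Riemann--Roch, and then to obtain the latter from codimension-two liaison (deficiency module) duality for the complete intersection linking $Z_r$ and $Z_{r+1}$.

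First I would record, for a length-$\ell$ subscheme $W\subset\PPP$ and any $t\ge 0$, the exact sequence $0\to\mathscr{I}_W(t)\to\oo_{\PPP}(t)\to\oo_W(t)\to 0$. Since $h^1(\oo_{\PPP}(t))=h^2(\oo_{\PPP}(t))=0$ for $t\ge 0$ and $h^0(\oo_W)=\ell$, the long exact sequence gives $h^0(\mathscr{I}_W(t))=\binom{t+2}{2}-\ell+h^1(\mathscr{I}_W(t))$. Applying this with $W=Z_r$, $\ell=d_r=\tfrac{r(r+1)}{2}$, $t=r-1$ gives $\binom{r+1}{2}-\tfrac{r(r+1)}{2}=0$, hence $h^0(\mathscr{I}_{Z_r}(r-1))=h^1(\mathscr{I}_{Z_r}(r-1))$; applying it with $W=Z_{r+1}$, $\ell=d_{r+1}=\tfrac{(r+1)(r+2)}{2}$, $t=r$ gives $\binom{r+2}{2}-\tfrac{(r+1)(r+2)}{2}=0$, hence $h^0(\mathscr{I}_{Z_{r+1}}(r))=h^1(\mathscr{I}_{Z_{r+1}}(r))$. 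So it suffices to prove $h^1(\mathscr{I}_{Z_r}(r-1))=h^1(\mathscr{I}_{Z_{r+1}}(r))$.

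For this I would invoke codimension-two liaison duality: if $W_1$ and $W_2$ are linked by a complete intersection of two curves of degrees $a$ and $b$ in $\PPP$, the deficiency modules are dual up to the twist $a+b-3$, so $h^1(\mathscr{I}_{W_1}(t))=h^1(\mathscr{I}_{W_2}(a+b-3-t))$ for every $t\in\ZZ$ (Peskine--Szpiro; cf. \cite{mig2}). Here $X$ and $Y$ are smooth of degree $r+1$, hence share no component, so $Z_r$ and $Z_{r+1}$ are linked by a complete intersection of type $(r+1,r+1)$; taking $a=b=r+1$ and $t=r-1$ gives $a+b-3-t=2(r+1)-3-(r-1)=r$, whence $h^1(\mathscr{I}_{Z_r}(r-1))=h^1(\mathscr{I}_{Z_{r+1}}(r))$. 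Combining with the two identities of the previous paragraph yields $h^0(\mathscr{I}_{Z_r}(r-1))=h^0(\mathscr{I}_{Z_{r+1}}(r))$.

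The only point requiring real care is the precise form and hypotheses of the liaison duality statement: one must ensure the linkage is through an honest complete intersection (guaranteed since $X\neq Y$ are smooth of the same degree and so have no common component) and that the shift $a+b-3$ is the correct one for $0$-dimensional subschemes of $\PPP$, which one can cross-check by the degree bookkeeping $\deg(X\cap Y)=(r+1)^2=d_r+d_{r+1}$. As a self-contained alternative to quoting the duality, one can run the Peskine--Szpiro mapping-cone construction directly: combine the Koszul resolution $0\to\oo_{\PPP}(-2r-2)\to\oo_{\PPP}(-r-1)^2\to\mathscr{I}_{X\cap Y}\to 0$ with a Hilbert--Burch resolution of $\mathscr{I}_{Z_r}$, form the cone of the lifted inclusion $\mathscr{I}_{X\cap Y}\hookrightarrow\mathscr{I}_{Z_r}$, dualize, and read off a resolution of $\mathscr{I}_{Z_{r+1}}$ in the appropriate twist; chasing cohomology through it gives the same equality of $h^1$'s, and hence the lemma.
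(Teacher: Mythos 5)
Your first reduction is correct: since $\chi(\mathscr{I}_{Z_{r}}(r-1))=\binom{r+1}{2}-d_{r}=0$ and $\chi(\mathscr{I}_{Z_{r+1}}(r))=\binom{r+2}{2}-d_{r+1}=0$, one indeed has $h^{0}=h^{1}$ on both sides. The gap is in the duality you invoke to compare the two $h^{1}$'s. The statement ``$h^{1}(\mathscr{I}_{W_{1}}(t))=h^{1}(\mathscr{I}_{W_{2}}(a+b-3-t))$ for subschemes of $\PPP$ linked by a complete intersection of type $(a,b)$'' is not a theorem: the Peskine--Szpiro/Migliore duality of deficiency modules concerns $H^{i}_{*}(\mathscr{I}_{V})$ for $1\leq i\leq \dim V$, which is vacuous for zero-dimensional schemes, and the naive transcription to points fails. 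Concretely, let $W_{1}$ and $W_{2}$ each consist of two of the four points of a transverse intersection of two smooth conics (so $a=b=2$ and $a+b-3=1$): then $h^{1}(\mathscr{I}_{W_{1}}(0))=1$ while $h^{1}(\mathscr{I}_{W_{2}}(1))=0$. The correct liaison statement for points in $\PPP$ is of Cayley--Bacharach type and mixes $h^{0}$ with $h^{1}$, namely $h^{0}(\mathscr{I}_{W_{2}}(t))-h^{0}(\mathscr{I}_{X\cap Y}(t))=h^{1}(\mathscr{I}_{W_{1}}(a+b-3-t))$.

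Your argument is repairable, and in fact becomes shorter once repaired: applying the correct identity with $a=b=r+1$ and $t=r$, and noting that $h^{0}(\mathscr{I}_{X\cap Y}(r))=0$ because the complete intersection ideal starts in degree $r+1$, gives $h^{0}(\mathscr{I}_{Z_{r+1}}(r))=h^{1}(\mathscr{I}_{Z_{r}}(r-1))$, which equals $h^{0}(\mathscr{I}_{Z_{r}}(r-1))$ by your Euler-characteristic computation. The repaired route is genuinely different from the paper's: the paper restricts everything to the smooth curve $X$, regards $Z_{r}$ and $Z_{r+1}$ as divisors on $X$ with $Z_{r}+Z_{r+1}\sim (r+1)H$, and applies Riemann--Roch together with $K_{X}=(r-2)H$; your route stays on $\PPP$ and trades that for liaison duality. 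But as written, the key step rests on a false general principle that only happens to produce the right answer here because $h^{0}(\mathscr{I}_{X\cap Y}(r))=0$ and both Euler characteristics vanish; you should either state and use the correct $h^{0}$--$h^{1}$ form of the duality, or carry out in detail the mapping-cone construction you sketch at the end (which yields the Hilbert--Burch resolution of $\mathscr{I}_{Z_{r+1}}$ and hence $h^{0}(\mathscr{I}_{Z_{r+1}}(r))$ directly, much as the paper does in its Section 2 for tangential points).
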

\begin{proof} Let $X,Y$ two smooth curves of degree $r+1$ such that $X\cap Y= Z_{r}\cup Z_{r+1}$, this is possible because the ideal of any element of $\PP^{2[d_{r}]}$ is generated by curves of degree $r$.

 Since the degree of $X$ is $r+1$, the genus of $X$ is $\frac{r(r-1)}{2}$, its canonical divisor have class $K_{X}=(r+1-3)H=(r-2)H$ and the class of $X\cap Y$ is $(r+1)H$ (where $H$ denote the hyperplane class of $X$). 

We consider to $Z_{r}$ as a divisor on $X$, for the above it has class $D=(r+1)H-Z_{r+1}$, by Riemann-Roch and the last equalities we have that:
\begin{align*}
h^{0}(\mathscr{I}_{Z_{r}}(r-1))&=h^{0}(\mathcal{O}_{X}((r-1)H-D))\\
&=h^{0}(\mathcal{O}_{X}(K_{X}+D-(r-1)H)) + grad((r-1)H-D)+1-g(X) \\
&=h^{0}(\mathcal{O}_{X}((r-2)H+D-(r-1)H)) + grad((r-1)H)-grad(D)+1-\frac{r(r-1)}{2} \\
&=h^{0}(\mathcal{O}_{X}(D-H)) + (r-1)(r+1)-\frac{r(r+1)}{2}+1-\frac{r(r-1)}{2} \\
&=h^{0}(\mathcal{O}_{X}((r+1)H-Z_{r+1}-H)) +0 \\
&=h^{0}(\mathcal{O}_{X}(rH-Z_{r+1}))  \\
&=h^{0}(\mathscr{I}_{Z_{r+1}}(r))
\end{align*}
\end{proof}

\begin{cor} \label{corresid}
For all $r\geqslant 3$ we have that $\mathscr{L} D_{r-1}=D_{r}$.
\end{cor}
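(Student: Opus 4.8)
The plan is to prove the equality $\mathscr{L}D_{r-1}=D_{r}$ by a double inclusion, using Lemma \ref{lema1} together with the characterization of $D_{k}$ via the vanishing $h^{0}(\mathscr{I}_{Z}(k-1))=1$. The key observation is that for a generic point of $D_{r-1}$ the ideal is minimally generated in degree $r$ (this is the content of the minimal free resolution recalled at the start of the section), so that any two general curves of degree $r+1$ through $Z$ meet properly and produce a well-defined residual scheme $Z^{\prime}=X_{r+1}\cap Y_{r+1}-Z$ of length $d_{r+1}-d_{r}\cdot$, wait—rather of length $(r+1)^{2}-d_{r}=d_{r+1}$, living in $\PP^{2[d_{r+1}]}$.

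For the inclusion $\mathscr{L}D_{r-1}\subseteq D_{r}$: take $Z^{\prime}\in\mathscr{L}D_{r-1}$, so $Z^{\prime}=X\cap Y-Z$ with $Z\in D_{r-1}$, i.e. $h^{0}(\mathscr{I}_{Z}(r-2))=1$. Applying Lemma \ref{lema1} with indices shifted (i.e. to the residual pair in degree $r+1$, reading $r-1$ in place of $r$ in the statement), one gets $h^{0}(\mathscr{I}_{Z}(r-2))=h^{0}(\mathscr{I}_{Z^{\prime}}(r-1))$, hence $h^{0}(\mathscr{I}_{Z^{\prime}}(r-1))=1$, which is exactly $Z^{\prime}\in D_{r}$. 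For the reverse inclusion $D_{r}\subseteq\mathscr{L}D_{r-1}$: start with $Z^{\prime}\in D_{r}$, so $h^{0}(\mathscr{I}_{Z^{\prime}}(r-1))=1$; since $h^{0}(\mathscr{I}_{Z^{\prime}}(r))$ is large (the points impose independent conditions generically, or at worst one uses that $r$ general curves through $Z^{\prime}$ exist), pick two general curves $X,Y$ of degree $r+1$ through $Z^{\prime}$ with no common component, set $Z:=X\cap Y-Z^{\prime}\in\PP^{2[d_{r}]}$, and invoke Lemma \ref{lema1} again to conclude $h^{0}(\mathscr{I}_{Z}(r-2))=h^{0}(\mathscr{I}_{Z^{\prime}}(r-1))=1$, so $Z\in D_{r-1}$ and therefore $Z^{\prime}=X\cap Y-Z\in\mathscr{L}D_{r-1}$. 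Combined with Remark \ref{rmk1.1}, which identifies $D_{r}$ (resp. $D_{r-1}$) as the generator of the effective cone other than $B$, this yields Theorem A, since "lying in the extremal divisor" means lying in $D_{r-1}$ on the source and in $D_{r}$ on the target.

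The main obstacle I anticipate is making the residual construction literally well-defined on the locus $D_{r}$, rather than merely on a dense open subset: one must ensure that through a \emph{given} $Z^{\prime}\in D_{r}$ there pass two degree-$(r+1)$ curves without common component and meeting in a zero-dimensional scheme of the right length containing $Z^{\prime}$ as a subscheme, and similarly that the curves in Lemma \ref{lema1} can be taken smooth (the lemma is stated for smooth $X,Y$, so one either extends it to integral, or even arbitrary complete-intersection, curves, or argues that smoothness can be arranged generically and the divisorial statement is insensitive to passing to closures). A secondary point to be careful about is the bookkeeping of indices: Lemma \ref{lema1} as stated relates $h^{0}(\mathscr{I}_{Z_{r}}(r-1))$ and $h^{0}(\mathscr{I}_{Z_{r+1}}(r))$ for residuality via curves of degree $r+1$, so to prove $\mathscr{L}D_{r-1}=D_{r}$ one applies it with the pair $(Z,Z^{\prime})$ of lengths $(d_{r-1},d_{r})$, hence with the letter "$r$" of the lemma replaced by "$r-1$" throughout, and one should state this shift explicitly to avoid confusion. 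Once these set-theoretic subtleties are dispatched, the proof is the short double inclusion above.
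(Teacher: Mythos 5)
Your overall strategy---a double inclusion, with both directions reduced to Lemma \ref{lema1}, and with the reverse inclusion obtained by choosing two general degree-$(r+1)$ curves through a generic $Z'\in D_{r}$ and passing to the residual---is exactly the paper's proof. The problem is a concrete indexing error in how you invoke the definitions and the lemma. In the paper's conventions, $D_{r-1}$ sits inside $\PP^{2[d_{r}]}$ and is cut out by $h^{0}(\mathscr{I}_{Z}(r-1))=1$ (not $h^{0}(\mathscr{I}_{Z}(r-2))=1$), $D_{r}$ sits inside $\PP^{2[d_{r+1}]}$ and is cut out by $h^{0}(\mathscr{I}_{Z'}(r))=1$ (not $h^{0}(\mathscr{I}_{Z'}(r-1))=1$), and $\mathscr{L}$ applied to a family in $\PP^{2[d_{r}]}$ residuates by curves of degree $r+1$, producing subschemes of length $d_{r+1}$---which is what you correctly compute in your first paragraph. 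Consequently the residual pair $(Z,Z')$ has lengths $(d_{r},d_{r+1})$, not $(d_{r-1},d_{r})$, and Lemma \ref{lema1} applies \emph{verbatim}, with no shift, yielding $h^{0}(\mathscr{I}_{Z}(r-1))=h^{0}(\mathscr{I}_{Z'}(r))$. The identity you write, $h^{0}(\mathscr{I}_{Z}(r-2))=h^{0}(\mathscr{I}_{Z'}(r-1))$, is what the lemma would give for a pair linked by curves of degree $r$, which is not the situation here, and your closing instruction to replace $r$ by $r-1$ throughout the lemma contradicts your own setup. Because your two off-by-one slips (in the cohomological characterization of $D_{k}$ and in the shift of the lemma) happen to be consistent with each other, the logical skeleton survives a uniform relabeling and the corrected argument is precisely the one in the paper; but as written, the displayed identities neither follow from Lemma \ref{lema1} for the pair actually under consideration nor certify membership in $D_{r}$ as the paper defines it.

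The secondary concerns you raise are reasonable but do not separate you from the paper. The worry about arranging the two degree-$(r+1)$ curves to be smooth, with no common component, and meeting properly is handled in the paper only by working with generic elements of each (irreducible) divisor, so that equality is really an equality of closures; the paper's proof is no more careful on this point than yours. Note also that your parenthetical justification that the ideal of a generic $Z\in D_{r-1}$ is ``minimally generated in degree $r$'' cannot be read off from the resolution displayed at the start of the section, since that resolution is for \emph{general} points of $\PP^{2[d_{r}]}$, whereas a generic point of $D_{r-1}$ has an extra generator in degree $r-1$; what you actually need is only that $h^{0}(\mathscr{I}_{Z}(r+1))\geq 2$ and that the base locus of that system is zero-dimensional, which holds.
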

\begin{proof}  If $Z_{r+1}$  is a generic element of $\mathscr{L}D_{r-1}$ by definition there exists an element $Z_{r}\in{D_{r-1}}$ residual to $Z_{r+1}$ via the complete intersection of two curves of degree $r+1$, by the Lemma \ref{lema1} we have that $1=h^{0}(\mathscr{I}_{Z_{r}}(r-1))=h^{0}(\mathscr{I}_{Z_{r+1}}(r))$ then $Z_{r+1}\in{D_{r}}$.

Let $Z_{r+1}$ a generic element of $D_{r}$, we know that its ideal is generated by an equation of degree $r$ and an equation of degree $r-1$, then we can consider two smooth surfaces $X$ and $Y$, of degree $r+1$ that contain $Z_{r+1}$ and do not have common components and take $Z_{r}=X\cap Y-Z_{r+1}$. Again by the Lemma \ref{lema1} and since $Z_{r+1}\in{D_{r}}$ we have that $1=h^{0}(\mathscr{I}_{Z_{r+1}}(r))=h^{0}(\mathscr{I}_{Z_{r}}(r-1))$ then $Z_{r}\in{D_{r-1}}$ thus $Z_{r+1}\in{\mathscr{L}D_{r-1}}$.\\

\end{proof}

Following the notation of \cite{vite} we have the next corollary:

\begin{cor}
    For $r\geq 3$. Given $C\in{\mathscr{L}^{r-3}\mathscr{C}^{h}\cup \mathscr{L}^{r-3}\mathscr{A} \subseteq \mathscr{H}_{r}}$ and $H$ a general hyperplane then $C\cap H\in{D_{r-1}}$.
\end{cor}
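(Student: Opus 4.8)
The plan is to prove the statement by induction on $r$, with base case $r=3$ and the inductive step built on Corollary~\ref{corresid}. The preliminary observation I would record is that linkage of space curves is compatible with a \emph{general} plane section: if $S,T\subseteq\PP^{3}$ are surfaces of degree $k$ with no common component and $S\cap T=C\cup C''$, where $C$ and $C''$ are curves meeting in a finite (possibly empty) set, then for a general hyperplane $H\subseteq\PP^{3}$ the plane curves $S\cap H$ and $T\cap H$ have degree $k$, meet transversally, and
\[ (S\cap H)\cap(T\cap H)=(S\cap T)\cap H=(C\cap H)\cup(C''\cap H), \]
with $C\cap H$ and $C''\cap H$ reduced, disjoint, and contained in the smooth loci of $S\cap H$ and $T\cap H$. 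Thus $C\cap H$ and $C''\cap H$ are residual to one another in $\PP^{2}=H$ via a complete intersection of two smooth plane curves of degree $k$. Since $d_{s}+d_{s+1}=(s+1)^{2}$, if $\deg C=d_{s+1}$ and $\deg C''=d_{s}$ then necessarily $k=s+1$, and this is exactly the degree occurring in the definition of $\mathscr{L}\colon\PP^{2[d_{s}]}\to\PP^{2[d_{s+1}]}$. In other words, the linkage operator on curves and the linkage operator on points are intertwined by a general plane section; this is the mechanism the whole proof rests on.

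For the base case $r=3$ I would take $C\in\mathscr{C}^{h}\cup\mathscr{A}$. By the description of these divisors in \cite{vite}, $C$ is a curve of degree $6$ and arithmetic genus $3$ in the liaison class of two disjoint lines, and in particular $C$ lies on a quadric surface $Q\subseteq\PP^{3}$ (equivalently $h^{0}(\mathscr{I}_{C}(2))=1$). For a general hyperplane $H$ the section $C\cap H$ then consists of six distinct points lying on the conic $Q\cap H$, so $h^{0}(\mathscr{I}_{C\cap H}(2))\geq 1$; and for general $H$ these six points are in sufficiently general position (for instance no five of them collinear, which holds because a general plane section of a non-degenerate sextic has no five collinear points), so they cannot lie on a pencil of conics and $h^{0}(\mathscr{I}_{C\cap H}(2))\leq 1$. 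Hence $h^{0}(\mathscr{I}_{C\cap H}(2))=1$, i.e. $C\cap H\in D_{2}$, which is the assertion for $r=3$.

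For the inductive step I would take $r\geq 4$ and $C\in\mathscr{L}^{r-3}\mathscr{C}^{h}\cup\mathscr{L}^{r-3}\mathscr{A}\subseteq\mathscr{H}_{r}$. Writing $\mathscr{L}^{r-3}\mathscr{C}^{h}\cup\mathscr{L}^{r-3}\mathscr{A}=\mathscr{L}(\mathscr{L}^{r-4}\mathscr{C}^{h}\cup\mathscr{L}^{r-4}\mathscr{A})$, the definition of $\mathscr{L}$ produces a curve $C'\in\mathscr{L}^{r-4}\mathscr{C}^{h}\cup\mathscr{L}^{r-4}\mathscr{A}\subseteq\mathscr{H}_{r-1}$ together with surfaces $S,T\subseteq\PP^{3}$ of degree $r$, which I would take smooth and without common component, such that $S\cap T=C\cup C'$. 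Choosing $H$ general (simultaneously for $C$, for $C'$, and for the pair $S,T$), the preliminary observation shows that the finite schemes $C\cap H\in\PP^{2[d_{r}]}$ and $C'\cap H\in\PP^{2[d_{r-1}]}$ are residual via a complete intersection of two smooth plane curves of degree $r$. By the induction hypothesis applied to $C'$ (legitimate since $r-1\geq 3$) we have $C'\cap H\in D_{r-2}$, hence $C\cap H\in\mathscr{L}D_{r-2}$; and Corollary~\ref{corresid}, applied with $r-1$ in place of $r$, gives $\mathscr{L}D_{r-2}=D_{r-1}$. Therefore $C\cap H\in D_{r-1}$, completing the induction.

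I expect the main difficulty to be bookkeeping of genericity rather than geometry: one must verify that a single general hyperplane $H$ can be chosen so that, at once, $C\cap H$ and $C'\cap H$ are reduced and disjoint, $S\cap H$ and $T\cap H$ are plane curves of degree $r$ meeting transversally with the two point sets inside their smooth loci (so that Lemma~\ref{lema1}, which underlies Corollary~\ref{corresid}, genuinely applies), and the induction hypothesis is available for $C'$; one also has to justify that the linking surfaces $S,T$ can be taken smooth. The single external input is the base case, namely that the curves of $\mathscr{C}^{h}$ and $\mathscr{A}$ lie on a quadric, which I would cite from \cite{vite}; granting that, the remainder of the argument is formal.
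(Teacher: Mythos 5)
Your overall architecture differs from the paper's: you argue by induction on $r$, linking \emph{down} from $C\in\mathscr{H}_{r}$ to a residual $C'\in\mathscr{H}_{r-1}$ via surfaces of degree $r$ and invoking Corollary~\ref{corresid} at the level of points, whereas the paper gives a direct, non-inductive argument: by \cite{vite}*{Thm.\ 4.1}, at each level exactly one of the two families $\mathscr{L}^{r-3}\mathscr{C}^{h}$, $\mathscr{L}^{r-3}\mathscr{A}$ (depending on the parity of $r$) coincides with $\mathscr{D}_{r-1}$, the curves lying on a surface of degree $r-1$, so its hyperplane section lands in $D_{r-1}$ for trivial reasons; the \emph{other} family is handled by linking \emph{up} once, via surfaces of degree $r+1$, into $\mathscr{D}_{r}$ at level $r+1$, and then applying Lemma~\ref{lema1} to the plane sections. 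Your inductive step is consistent with the definitions (the degree bookkeeping $d_{r-1}+d_{r}=r^{2}$ is right, and $\mathscr{L}D_{r-2}=D_{r-1}$ is Corollary~\ref{corresid} with index $r-1\geq 3$), so in principle induction could replace the paper's direct argument.

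However, there is a genuine gap in your base case, and it sits exactly where the real content of the statement is. You assert that every $C\in\mathscr{C}^{h}\cup\mathscr{A}$ lies on a quadric, deducing this from the fact that such curves are in the liaison class of two disjoint lines. That inference is false: lying on a quadric is a divisorial condition on $\overline{\mathscr{C}_{3}}$, and by \cite{vite}*{Thm.\ 4.1} the locus $\mathscr{D}_{2}$ of sextics of genus $3$ on a quadric is precisely $\mathscr{C}^{h}$; since $\mathscr{A}$ is a \emph{distinct} irreducible divisor, its general member does \emph{not} lie on a quadric, so $h^{0}(\mathscr{I}_{C}(2))=0$ for general $C\in\mathscr{A}$. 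The nontrivial assertion of the corollary at $r=3$ is that the general plane section of such a $C$ nevertheless lies on a conic, and your argument gives nothing for it. To repair this you would have to handle $\mathscr{A}$ by some residuation argument --- e.g.\ link $C\in\mathscr{A}$ via two quartic surfaces to $C'\in\mathscr{L}\mathscr{A}=\mathscr{D}_{3}$, cut with $H$, and apply Lemma~\ref{lema1} to conclude $h^{0}(\mathscr{I}_{C\cap H}(2))=h^{0}(\mathscr{I}_{C'\cap H}(3))=1$ --- but that is precisely the paper's mechanism, not yours. A secondary, smaller issue: in the inductive step the linking surfaces $S,T$ of degree $r$ are imposed by the membership $C\in\mathscr{L}(\cdots)$, and the plane curves $S\cap H$, $T\cap H$ must be arranged to be smooth for Lemma~\ref{lema1} (hence Corollary~\ref{corresid}) to apply as stated; this needs a word of justification, though the paper is equally brisk on this point.
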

\begin{proof}
    By \cite{vite}*{Thm. 4.1} we have that when $r$ is an odd number then $\mathscr{L}^{r-3}\mathscr{C}^{h}=\mathscr{D}_{r-1}$ and when $r$ is an even number then $\mathscr{L}^{r-3}\mathscr{A}=\mathscr{D}_{r-1}$ where $\mathscr{D}_{r-1}$ is the set of curves contained in a surface of degree $r-1$. Thus, in both cases if $C\in{\mathscr{D}_{r-1}}$ is contained in $S$ a surface of degree $r-1$ then $S\cap H$ is a curve of degree $r-1$ containing $C\cap H$ and by definition this implies that $C\cap H\in{D_{r-1}}$.

    Suppose that $r$ is an even number and let $C\in{\mathscr{L}^{r-3}\mathscr{C}^{h}}$. Then we can consider two surfaces $S$ and $S^{\prime}$ of degree $r+1$ containing $C$ without common components. Let $C^{\prime}=S\cap S^{\prime}-C$, by definition $C^{\prime}\in{\mathscr{L}(\mathscr{L}^{r-3}\mathscr{C}^{h})=\mathscr{L}^{(r+1)-3}\mathscr{C}^{h}}$ and again by \cite{vite}*{Thm. 4.1} this implies that $C^{\prime}$ is contained in a surface of degree $r$ thus $Z^{\prime}=C^{\prime}\cap H$ is contained in a curve of degree $r$, but $Z=C\cap H =(S\cap H) \cap (S^{\prime}\cap H)-Z^{\prime}$ then $Z$ is residual to $Z^{\prime}$ by the complete intersection of the curves $S\cap H$ and $S^{\prime}\cap H$ of degree $r+1$, therefore by the Corollary \ref{corresid} we have that $Z\in{D_{r-1}}$.

    The case when $r$ is an odd number and $C\in{\mathscr{L}^{r-3}\mathscr{A}}$ is analogous.\\
\end{proof}

\section{Tangential points}

For $r\geq 1$, let $Z$ be a set of $d_{r}=2r(r+1)$ general points in the projective plane $\mathds{P}^{2}$, this scheme has minimal free resolution:
\begin{equation} \label{eqgentan}
0\to \mathcal{O}_{\mathds{P}^{2}}(-2(r+1))^{r}\to \mathcal{O}_{\mathds{P}^{2}}(-2r)^{r+1}\to \mathscr{I}_{Z}\to 0
\end{equation}

The bundle $T_{\PP^{2}}(2r-2)$ satisfies interpolation for $d_{r}$ general points, and the  Brill-Noether divisor $D_{r}:=D_{T_{\PP^{2}}(2r-2)}$ parametrize subsquemes $Z\in{\PP^{2[d_{r}]}}$ for which the multiplication morphism
$$ \mu_{2s}:H^{0}(\PP^{2},\mathscr{I}_{Z}(2s)) \otimes H^{0}(\PP^{2},\mathcal{O}_{\PP^{2}}(1)) \to H^{0}(\PP^{2},\mathscr{I}_{Z}(2s+1))$$
is not subjective.

\begin{rmk}
    By \cite{hui}, the effective cone of $\PP^{2[d_{r}]}$ is generated by $B$ (remark \ref{rmk1.1}) and the class of $D_{r}$.
\end{rmk}

\begin{rmk} \label{obs1}
    An alternative definition of the divisor $D_{r}$ is the complement of the open set of points $Z\in{\PP^{2[d_{r}]}}$ that have a resolution as (\ref{eqgentan}). The general element of $Z\in{D_{r}}$ admits an exact sequence of the form:
    \begin{equation} \label{restangext}
     \xymatrix{   0 \to \mathcal{O}_{\PP^{2}} (-2r-1) \oplus\mathcal{O}_{\PP^{2}}(-2r-2)^{r} \ar[r]^{\quad\varphi}& \mathcal{O}_{\PP^{2}}(-2r)^{r+1} \oplus \mathcal{O}_{\PP^{2}}(-2r-1) \ar[r]& \mathscr{I}_{Z} \to 0}
    \end{equation}
\end{rmk}

\begin{definition}
     Given a family $\mathscr{A}\subseteq \PP^{2[d_{r}]}$ we define:
    $$\mathscr{L}\mathscr{A}:=\{Z^{\prime}\in{\PP^{2[d_{r+1}]}}|\exists Z\in{\mathscr{A}} \text{ such that } Z^{\prime}=X_{2(r+1)}\cap Y_{2(r+1)}-Z\}\subseteq \PP^{2[d_{r+1}]} $$
    with $X_{2(r+1)}$ and $Y_{2(r+1)}$ smooth curves of degree $2(r+1)$.
\end{definition}

\begin{prop} \label{prop1}
    For all $r\geq 1$ is satisfied:
    $$\mathscr{L}D_{r}\subseteq D_{r+1}$$
\end{prop}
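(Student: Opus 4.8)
The plan is to prove the (formally stronger, symmetric) statement that for $Z$ general in $D_r$ and $X,Y$ general smooth plane curves of degree $2(r+1)$ through $Z$ with $X\cap Y=Z\sqcup Z'$, one has $Z\in D_r$ if and only if $Z'\in D_{r+1}$. This suffices: $D_{r+1}$ is closed, $\mathscr{L}D_r$ is irreducible (it is the image of an incidence variety fibering over $D_r$ with irreducible fibers), and its general point has exactly this form with $Z$ general in $D_r$; hence the "only if" direction gives $\overline{\mathscr{L}D_r}\subseteq D_{r+1}$, and the proposition follows. Such $X,Y$ exist because, by the resolution (\ref{restangext}) of the general $Z\in D_r$, the sheaf $\mathscr{I}_Z$ is $(2r+1)$-regular, so $\mathscr{I}_Z(2r+2)$ is globally generated with base scheme $Z$; since $\operatorname{length}(X\cap Y)=4(r+1)^2=d_r+d_{r+1}$, general members give $Z\sqcup Z'$ with $Z'$ reduced of length $d_{r+1}$ and disjoint from $Z$. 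Recall also that, by Remark \ref{obs1}, $Z\in D_r$ is equivalent to the non-surjectivity of $\mu^Z_{2r}$ and $Z'\in D_{r+1}$ to the non-surjectivity of $\mu^{Z'}_{2r+2}$.

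The first step is to transfer both conditions onto the smooth curve $X$, which has degree $2r+2$, genus $g=r(2r+1)$, dualizing sheaf $\omega_X=\mathcal{O}_X((2r-1)H)$ (writing $H=H_X$), and on which $Z+Z'\sim(2r+2)H$. For $W\in\{Z,Z'\}$ and $k\le 2r+1$ the sequence $0\to\mathcal{O}_{\PP^2}(k-2r-2)\to\mathscr{I}_W(k)\to\mathcal{O}_X(kH-W)\to0$ gives $H^0(\PP^2,\mathscr{I}_W(k))=H^0(X,\mathcal{O}_X(kH-W))$, compatibly with multiplication by linear forms. Thus $\mu^Z_{2r}$ is the multiplication map $m_1\colon H^0(A)\otimes V\to H^0(A(1))$ with $A:=\mathcal{O}_X(2rH-Z)=\omega_X(1)\otimes\mathcal{O}_X(-Z)$ and $V:=H^0(\mathcal{O}_X(H))\cong H^0(\mathcal{O}_{\PP^2}(1))$. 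From (\ref{restangext}) one gets $h^0(A)=h^0(\mathscr{I}_Z(2r))=r+1$, and since $\chi(A)=r+1$ also $h^1(A)=0$; tensoring the (restricted, twisted) Euler sequence $0\to M_V\to V\otimes\mathcal{O}_X\to\mathcal{O}_X(H)\to0$, $M_V:=\Omega^1_{\PP^2}(1)|_X$ (rank $2$), by $A$ and taking cohomology gives $\operatorname{coker}(m_1)\cong H^1(X,M_V\otimes A)$. Hence $Z\in D_r\iff H^1(X,M_V\otimes A)\ne0$. For $Z'$ we do the same at degrees $2r+2$ and $2r+3$, where the three-term sequences now also carry the summands $\mathcal{O}_{\PP^2}$ and $\mathcal{O}_{\PP^2}(1)$ coming from the equation of $X$; the resulting diagram has surjective left column, so the snake lemma yields $\operatorname{coker}(\mu^{Z'}_{2r+2})\cong\operatorname{coker}(\bar\mu)$ where $\bar\mu\colon H^0(N)\otimes V\to H^0(N(1))$, $N:=\mathcal{O}_X(Z)$ (using $(2r+2)H-Z'\sim Z$). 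Again $h^0(N)=r+1$, $h^1(N)=h^0(\mathscr{I}_Z(2r-1))=0$, so the same Euler-sequence computation gives $\operatorname{coker}(\bar\mu)\cong H^1(X,M_V\otimes N)$, whence $Z'\in D_{r+1}\iff H^1(X,M_V\otimes N)\ne0$.

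The heart of the argument is to bridge $H^1(M_V\otimes A)$ and $H^1(M_V\otimes N)$ by Serre duality on $X$. One has $H^1(X,M_V\otimes A)^\vee=H^0(X,M_V^\vee\otimes A^{-1}\otimes\omega_X)$; now $M_V^\vee=T_{\PP^2}(-1)|_X$, and $A^{-1}\otimes\omega_X=\mathcal{O}_X(Z-2rH)\otimes\mathcal{O}_X((2r-1)H)=\mathcal{O}_X(Z-H)$, while on $\PP^2$ one has $\Omega^1_{\PP^2}(1)=T_{\PP^2}(-2)$; therefore $M_V^\vee\otimes A^{-1}\otimes\omega_X=T_{\PP^2}(-2)|_X\otimes\mathcal{O}_X(Z)=M_V\otimes N$, so $H^1(M_V\otimes A)^\vee\cong H^0(M_V\otimes N)$. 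Finally a direct Riemann–Roch computation gives $\chi(X,M_V\otimes N)=\deg(M_V\otimes N)+2(1-g)=\bigl(-(2r+2)+4r(r+1)\bigr)+2\bigl(1-r(2r+1)\bigr)=0$ (equivalently, $\bar\mu$ has source and target both of dimension $3(r+1)$), hence $h^0(M_V\otimes N)=h^1(M_V\otimes N)$. Chaining the equivalences, $Z\in D_r\iff H^1(M_V\otimes A)\ne0\iff H^0(M_V\otimes N)\ne0\iff H^1(M_V\otimes N)\ne0\iff Z'\in D_{r+1}$, which in particular gives $\mathscr{L}D_r\subseteq D_{r+1}$.

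I expect two points to carry the real work. First, the cohomological transfer to $X$: the vanishings $h^1(A)=h^1(N)=h^1(N(1))=0$ must be verified, and this is precisely where the genericity of $Z$ in $D_r$ enters through the resolution (\ref{restangext}) (it forces $h^0(\mathscr{I}_Z(k))=0$ for $k\le 2r-1$ and $h^0(\mathscr{I}_Z(2r))=r+1$); one must also confirm the snake-lemma identification $\operatorname{coker}(\mu^{Z'}_{2r+2})\cong\operatorname{coker}(\bar\mu)$ and that the general curve in the linear system is smooth. Second, the Serre-duality step: all the twists must line up so that $M_V^\vee\otimes A^{-1}\otimes\omega_X$ is exactly $M_V\otimes N$, which depends on $\omega_X=\mathcal{O}_X((2r-1)H)$, on $\Omega^1_{\PP^2}(1)=T_{\PP^2}(-2)$, and on the linking curves having degree precisely $2(r+1)$. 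The identity $\chi(M_V\otimes N)=0$ is the numerical coincidence that makes the equivalence close up.
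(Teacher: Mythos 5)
Your proof is correct, but it takes a genuinely different route from the paper's. The paper argues purely through free resolutions: starting from the minimal resolution (\ref{restangext}) of the general element of $D_r$, it applies the Peskine--Szpiro mapping-cone construction for liaison to write down a resolution of the residual scheme, checks that the resulting presentation matrix (the transpose of $[\varphi]$ bordered by the coefficient rows of the two linking forms) has no nonzero constant entries, so the resolution is minimal and not of the generic shape (\ref{eqgentan}), and concludes membership in $D_{r+1}$ via Remark \ref{obs1}. You instead restrict to the smooth linking curve $X$, identify the cokernels of the two multiplication maps with $H^1(X,M_V\otimes A)$ and $H^1(X,M_V\otimes N)$ via the Euler sequence, and close the loop with Serre duality together with $\chi(M_V\otimes N)=0$; this is essentially the strategy of the paper's own Lemma \ref{lema1} for triangular points, upgraded from an $h^0$ condition to a Brill--Noether (multiplication-map) condition. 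I verified your numerics ($g(X)=r(2r+1)$, $\omega_X=\mathcal{O}_X((2r-1)H)$, $h^0(A)=h^0(N)=r+1$ and $h^1(A)=h^1(N)=0$ from (\ref{restangext}), $\deg(M_V\otimes N)+2(1-g)=0$), the snake-lemma reduction at degree $2r+2$, and the identity $M_V^\vee\otimes A^{-1}\otimes\omega_X=M_V\otimes N$; all check out. The trade-off: your argument delivers the equivalence $Z\in D_r\iff Z'\in D_{r+1}$ in one stroke, so it nearly subsumes the dimension count the paper needs afterwards to upgrade $\subseteq$ to equality, whereas the paper's mapping-cone argument produces the full minimal free resolution of the residual, which is finer information. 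Note that both proofs genuinely treat only the \emph{general} element of $D_r$ (yours needs $h^0(\mathscr{I}_Z(2r-1))=0$ and $h^0(\mathscr{I}_Z(2r))=r+1$, the paper needs the resolution (\ref{restangext})); you are explicit about closing up via the irreducibility of $\mathscr{L}D_r$ and the closedness of $D_{r+1}$, a point the paper leaves implicit.
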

\begin{proof}
    Let $Z\in{\mathscr{L}D_{r}}$, by definition there exists $Z^{\prime}\in{D_{r}}$ residual to $Z$ by the complete intersection of two curves $X=V(F)$ and $Y=V(G)$ of degrees $2(r+1)$. Since that $Z^{\prime}$ is an element of $D_{r}$, by the remark \ref{obs1}, $Z^{\prime}$ have minimal free resolution as (\ref{restangext}), following the notation of \cite[pp.280]{PS} we have that $Z$ has resolution:
\begin{equation} \label{tanr+1}
\xymatrix{0\to \sum_{1}^{m} \mathcal{O}_{\mathds{P}^{2}}(-N_{j})\ar[r]^{\psi}& \sum_{1}^{m+1}\mathcal{O}_{\mathds{P}^{2}}(-D_{i})\ar[r]& \mathscr{I}_{Z}\to 0}
\end{equation}

with $m=r+2$, $N_{j}=2(2(r+1))-d_{j}$ for $j\in{\{1,\ldots , m\}}$, $D_{i}=2(2(r+1))-n_{i}$ for $i\in{\{1,\ldots , m-1\}}$ and $D_{m}=D_{m+1}=2(r+1)$, replacing the values in the resolution(\ref{tanr+1}) we obtain the resolution
{\footnotesize$$\xymatrix{   0 \to \mathcal{O}_{\PP^{2}} (-2(r+1)-1)  \oplus \mathcal{O}_{\PP^{2}}(-2(r+1)-2)^{r+1}\ar[r]^{\quad\psi}& \mathcal{O}_{\PP^{2}}(-2(r+1))^{(r+1)+1} \oplus \mathcal{O}_{\PP^{2}}(-2(r+1)-1)  \ar[r]& \mathscr{I}_{Z} \to 0}$$}

On the other hand, $F=\displaystyle \sum_{1}^{m} \lambda_{i}A_{i}$ and $G=\displaystyle \sum_{i}^{m}\mu_{i}A_{j}$ with $A_{i}$ the $(m-1)$-minors of the matrix $[\varphi]$, thus $\lambda_{1}$ and $\mu_{1}$ are homogeneous polynomials of degree $1$ and $\lambda_{i}$,$\mu_{i}$ with $i\in{\{2,\ldots , m\}}$ are homogeneous polynomials of degree $2$, then the matrix:
$$[\psi]=\left (
\begin{array}{c}
\begin{array}{|c|} \hline \\
\quad ^{t}[\varphi] \quad\\
\\ \hline
\end{array} \\
\lambda_{1}  \ldots  \lambda_{m} \\
      \mu_{1} \ldots  \mu_{m}
\end{array}
\right )$$
do not have constant entries no zero, thus the resolution is minimal, and again by the remark \ref{obs1} we have that $Z$ is an element of $D_{r+1}$.\\
\end{proof}

\begin{rmk}
    Given a general point $Z$ on $D_{r}$ and $Z^{\prime}$ on $\mathscr{L}D_{r}\subseteq D_{r+1}$, from its resolution (\ref{restangext}) we have that $$h^{0}(\PP^{2},\mathscr{I}_{Z}(2(r+1))=5r+6 \text{   and   } h^{0}(\PP^{2},\mathscr{I}_{Z^{\prime}}(2(r+1))=r+2$$
\end{rmk}

\begin{cor}
    For all $r\geq 1$ we have that $$\mathscr{L}D_{r}= D_{r+1}$$
\end{cor}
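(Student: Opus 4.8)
The plan is to prove the two inclusions separately. One of them, $\mathscr{L}D_{r}\subseteq D_{r+1}$, is already Proposition~\ref{prop1}, so the whole content is the reverse inclusion $D_{r+1}\subseteq\mathscr{L}D_{r}$, which I would obtain by realizing a general element of $D_{r+1}$ as a residual of an element of $D_{r}$, in the same spirit as the proof of Corollary~\ref{corresid} in the triangular case. Let $Z'\in D_{r+1}$ be a general point. By Remark~\ref{obs1}, applied with $r+1$ in place of $r$, $\mathscr{I}_{Z'}$ has the minimal free resolution~(\ref{restangext}) with $r+1$ in place of $r$; in particular $\mathscr{I}_{Z'}$ is generated by $r+2$ forms of degree $2(r+1)$ together with one form of degree $2(r+1)+1$, so $h^{0}(\mathscr{I}_{Z'}(2(r+1)))=r+2\geq 3$ and $Z'$ lies on no curve of degree $<2(r+1)$. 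I would then pick two general curves $X=V(F)$, $Y=V(G)$ in the linear system $|\mathscr{I}_{Z'}(2(r+1))|$; since $Z'$ is general and lies on no curve of smaller degree, this system has no fixed component, so $F$ and $G$ have no common factor, and by Bertini $X,Y$ may be taken smooth. Set $Z:=X\cap Y-Z'$. Because $\deg(X\cap Y)=4(r+1)^{2}$ and $\deg Z'=2(r+1)(r+2)$, we get $\deg Z=2r(r+1)=d_{r}$, so $Z\in\PP^{2[d_{r}]}$; moreover $Z'=X\cap Y-Z$, so $Z'\in\mathscr{L}D_{r}$ as soon as we know $Z\in D_{r}$.

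The crux is therefore to prove $Z\in D_{r}$, and the cleanest way I see is to use that liaison preserves the \emph{generic} stratum. A short Peskine--Szpiro computation, entirely parallel to the one in Proposition~\ref{prop1}, shows that if $W\in\PP^{2[d_{r}]}$ has the generic resolution~(\ref{eqgentan}) and $W$ is linked to $W'$ by a complete intersection of two curves of degree $2(r+1)$, then $W'\in\PP^{2[d_{r+1}]}$ has the generic resolution~(\ref{eqgentan}) with $r+1$ in place of $r$: the mapping-cone resolution one gets has source $\mathcal{O}_{\PP^{2}}(-2r-4)^{r+1}$ and target $\mathcal{O}_{\PP^{2}}(-2r-2)^{r+2}$, and it is automatically minimal since no twist is repeated, so no cancellation is possible. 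Now suppose, for contradiction, that $Z\notin D_{r}$, i.e. $Z$ lies in the generic stratum of $\PP^{2[d_{r}]}$. Applying the computation to $W=Z$ linked to $W'=Z'$ via $X\cap Y$, we would conclude that $Z'$ lies in the generic stratum of $\PP^{2[d_{r+1}]}$, contradicting $Z'\in D_{r+1}$. Hence $Z\in D_{r}$, and therefore $Z'=X\cap Y-Z\in\mathscr{L}D_{r}$.

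The step I expect to be the real work is the liaison computation just invoked, together with the verification that the residual is well-behaved: for a general choice of $X,Y$ one must check that $X\cap Y$ is a reduced set of $4(r+1)^{2}$ points containing $Z'$ and $Z$ disjointly, so that $Z$ and $Z'$ are genuinely linked and the Peskine--Szpiro machinery applies as stated. (If one prefers a more hands-on route, one can instead compute the minimal free resolution of $\mathscr{I}_{Z}$ directly, as in Proposition~\ref{prop1}: the mapping cone built from the minimal resolution of $\mathscr{I}_{Z'}$ gives a resolution of $\mathscr{I}_{Z}$ of the shape~(\ref{restangext}), and the only entry of its matrix that could be a nonzero constant --- the $\mathcal{O}_{\PP^{2}}(-2r-1)\to\mathcal{O}_{\PP^{2}}(-2r-1)$ entry --- is, up to sign, the corresponding constant entry of the resolution matrix of $\mathscr{I}_{Z'}$, which vanishes precisely because $Z'\in D_{r+1}$; so the resolution is minimal and $Z\in D_{r}$ by Remark~\ref{obs1}.) Finally, having shown that a general element of $D_{r+1}$ lies in $\mathscr{L}D_{r}$, irreducibility of the divisor $D_{r+1}$ together with Proposition~\ref{prop1} yields $\mathscr{L}D_{r}=D_{r+1}$.
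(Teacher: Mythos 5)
Your proposal is correct in substance but takes a genuinely different route from the paper. The paper keeps Proposition \ref{prop1} as the only inclusion it proves directly and then gets equality by a dimension count: it forms the incidence variety $W_{r}=\{(Z,Z')\,:\,Z\cup Z'=X_{2(r+1)}\cap Y_{2(r+1)}\}$, computes the fibres of the two projections as Grassmannians $G(2,h^{0}(\mathscr{I}_{Z}(2(r+1))))=G(2,5r+6)$ and $G(2,h^{0}(\mathscr{I}_{Z'}(2(r+1))))=G(2,r+2)$, and finds $\dim\mathscr{L}D_{r}=2d_{r}-1+2(5r+4)-2r=2d_{r+1}-1$, so that $\mathscr{L}D_{r}$ is dense in the irreducible divisor $D_{r+1}$. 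You instead prove the reverse inclusion directly, linking a general $Z'\in D_{r+1}$ to a residual $Z$ of degree $d_{r}$ and showing $Z\in D_{r}$ by contraposition: if $Z$ had the generic resolution (\ref{eqgentan}), the Peskine--Szpiro mapping cone (which here is automatically minimal, since the twists $-2r-4$ and $-2r-2$ admit no constant entries) would force $Z'$ to have the generic resolution for $d_{r+1}$ points, contradicting $Z'\in D_{r+1}$. Your argument is the exact mirror of Proposition \ref{prop1} and is arguably more informative --- it identifies which elements of $D_{r+1}$ are hit and does not need the values $h^{0}(\mathscr{I}_{Z}(2(r+1)))=5r+6$ or the dimension of $D_{r+1}$ --- while the paper's count is shorter once those $h^{0}$'s are known; both ultimately invoke irreducibility of $D_{r+1}$ and both really establish equality only up to closure of $\mathscr{L}D_{r}$. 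Two caveats on your write-up: your justification that $|\mathscr{I}_{Z'}(2(r+1))|$ has no fixed component (``$Z'$ lies on no curve of smaller degree'') is not by itself sufficient --- a cleaner route is to note that the no-fixed-component condition is open on the irreducible $D_{r+1}$ and is satisfied on the nonempty locus $\mathscr{L}D_{r}$; and in your parenthetical ``hands-on'' variant the mapping cone for $\mathscr{I}_{Z}$ has two extra summands $\mathcal{O}_{\PP^{2}}(-2r-2)^{2}$ that must cancel before one reaches the shape (\ref{restangext}), so the minimality discussion there is slightly more involved than the single $\mathcal{O}_{\PP^{2}}(-2r-1)\to\mathcal{O}_{\PP^{2}}(-2r-1)$ entry you mention. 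Neither caveat affects your main line of argument.
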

\begin{proof}
Since $D_{r+1}$ is irreducible and of dimension $2d_{r+1}-1$ and by the Proposition \ref{prop1} we have that $\mathscr{L}D_{r}\subseteq D_{r+1}$, thus it is enough to prove that the dimension of $ \mathscr{L}D_{r}$ is $2d_{r+1}-1$.
Let $W_{r}$ the incidence variety defined by: $$ \xymatrix{&&&W_{r}:=\{ (Z,Z^{\prime})\in{D_{r}\times \mathscr{L}D_{r}} | Z\cup Z^{\prime}=X_{2(r+1)}\cap Y_{2(r+1)} \} \ar[ld]_{\pi_{r}^{0}}\ar[rd]^{\pi_{r}^{1}}&&&\\
&&D_{r} & &\mathscr{L}D_{r}&&}$$
with $X_{2(r+1)}$ and $Y_{2(r+1)}$ curves of degree $2(r+1)$. Then we have that:
\begin{align*}
    dim\,\mathscr{L}D_{r} &= dim\,W_{r} - dim\,(\pi_{r}^{1})^{-1}\\
    & = (dim\, D_{r} + dim\,(\pi_{r}^{0})^{-1})- dim\,(\pi_{r}^{1})^{-1}\\
    &= (2d_{r}-1+dim\,G(2,h^{0}(\PPP, \ii_{Z}(2(r+1))) )) -dim\,G(2, h^{0}(\PPP, \ii_{Z^{\prime}}(2(r+1))) ) \\
    &= 2d_{r}-1+2(5r+6-2)-2(r+2-2)\\
    &= 2d_{r+1}-1
\end{align*}
\end{proof}

\begin{bibdiv}
\begin{biblist}

\bib{apery}{article}{
Author = {Ap{\'e}ry, Roger},
 Title = {Sur certains caract{\`e}res num{\'e}riques d'un id{\'e}al sans composant impropre},
 Journal = {C. R. Acad. Sci., Paris},
 ISSN = {0001-4036},
 Volume = {220},
 Pages = {234--236},
 Year = {1945},
 Language = {French},
}

\bib{abch}{article}{ 
  title={The minimal model program for the Hilbert scheme of points on P2 and Bridgeland stability},
  author={Arcara, Daniele},
  author={Bertram, Aaron},
  author={Coskun, Izzet},
  author={Huizenga, Jack},
  journal={Advances in mathematics},
  volume={235},
  pages={580--626},
  year={2013},
  publisher={Elsevier}
}

\bib{fogarty}{article}{
 Author = {Fogarty, J.},
 Title = {Algebraic families on an algebraic surface. {II}: {The} {Picard} scheme of the punctual {Hilbert} scheme},
 FJournal = {American Journal of Mathematics},
 Journal = {Am. J. Math.},
 ISSN = {0002-9327},
 Volume = {95},
 Pages = {660--687},
 Year = {1973},
 Language = {English},
 DOI = {10.2307/2373734},
 Keywords = {14C22,14J10,14D05,14C20,14K30},
 zbMATH = {3468986},
 Zbl = {0299.14020}
}
\bib{gaeta}{article}{
 Author = {Gaeta, Federico},
 Title = {Sulle curve sghembe algebriche di residuale finito},
 Journal = {Ann. Mat. Pura Appl. (4)},
 ISSN = {0373-3114},
 Volume = {27},
 Pages = {177--241},
 Year = {1948},
 Language = {Italian},
 DOI = {10.1007/BF02415567}
}

\bib{mig2}{book}{ Author = {Migliore, Juan C.},
Title = {Introduction to liaison theory and deficiency modules},
 Series = {Prog. Math.},
 ISSN = {0743-1643},
 Volume = {165},
 ISBN = {0-8176-4027-4},
 Year = {1998},
 Publisher = {Boston, MA: Birkh{\"a}user},
 Language = {English},
}

\bib{mignag}{article}{ 
  title={Liaison and related topics: Notes from the Torino Workshop/School},
  author={Migliore, Juan},
  author={Nagel, Uwe},
   eprint={https://arxiv.org/pdf/math/0205161},
  year={2002}
}

\bib{hui}{article}{
  title={Restrictions of Steiner bundles and divisors on the Hilbert scheme of points in the plane},
  author={Huizenga, Jack},
  journal={International Mathematics Research Notices},
  volume={2013},
  number={21},
  pages={4829--4873},
  year={2013},
  publisher={OUP}
}

 \bib{vite}{article}{
 title={ Liaison theory and the birational geometry of the Hilbert scheme of curves in the projective 3-space},
  author={Vite, M.},
 eprint={https://arxiv.org/pdf/2302.06694},
  year={2023}

}
\end{biblist} 
\end{bibdiv}
\end{document}